\theoremstyle{plain}
\newtheorem{theorem}{Theorem}
\newtheorem{lemma}[theorem]{Lemma}
\theoremstyle{definition}
\begin{document}

\title{Geometric Structure of Sumsets}
\author{Jaewoo Lee}
\address{Department of Mathematics\\ Borough of Manhattan Community College\\ The City University of New York\\ 199 Chambers Street\\ New York, NY 10007}
\email{jlee@bmcc.cuny.edu}
\keywords{sumset, polytope, lattice points}
\subjclass[2000]{Primary: 11H06, 11P21; Secondary: 52B20}
\date{April 25, 2007}
\begin{abstract}
 Given a finite set of lattice points, we compare its sumsets and lattice points in its dilated convex hulls. Both of these are known to grow as polynomials. Generally, the former are subsets of the latter. In this paper, we will see that sumsets occupy all the central lattice points in convex hulls, giving us a kind of approximation to lattice points in polytopes.
\end{abstract}

\maketitle

\section{Introduction}\label{S:SecIntro}

Let $A$ be a a set of lattice points in dimension $n$. For any nonnegative integer $h$, we define the \emph{h-fold sumset} $hA = \{ a_1 + a_2 + \ldots + a_h : a_1, a_2, \ldots, a_h \in A\}$. And the \emph{dilation} is
$ h*A = \{ ha : a \in A\}$. 

A \emph{hyperplane} $H$ is the set $\{x \in \mathbb{R}^n : (x,u)=\alpha\}$ for some nonzero $u \in \mathbb{R}^n$ and some number $\alpha$, where (\textperiodcentered\,,\,\textperiodcentered) indicates an inner product in $\mathbb{R}^n$\,. The vector $u$ is called a \emph{normal vector} to $H$. A hyperplane divides $\mathbb{R}^n$ into two closed half-spaces $H^+$ and $H^-$ where
\[ H^+ = \{x \in \mathbb{R}^n : (x,u)\ge\alpha\} \]
\[ H^- = \{x \in \mathbb{R}^n : (x,u)\le\alpha\} \,. \]

We write $d(x,y)$ to denote the distance between two points $x, y \in \mathbb{R}^n$\,. If $S, T \subseteq \mathbb{R}^n$\,, then 
\[ d(x, S) = \inf_{s \in S} d(x,s) \,, \]
\[d(S,T)= \inf_{s\in S, t \in T} d(s,t)\,.\] In particular, the distance from a point $x \in \mathbb{R}^n$  to a hyperplane $H$ where $x \notin H$, is given by the length of the perpendicular line segment from $x$ to $H$.

If two hyperplanes $H_1, H_2$ are parallel, their normal vectors are multiples of each other, so we can take a single normal vector $u$ and write $H_1=\{ x : (x,u) = \alpha_1 \}$ and $H_2 = \{ x : (x,u) = \alpha_2 \}$. Take any $x \in H_1$\,. Then $d(x, H_2)$  is given by the perpendicular line segment. To calculate the distance between $H_1$ and $H_2$, note that $x + tu$ where $t \in \mathbb{R}$ gives the perpendicular ray from $x$ to $H_2$\,. If the ray meets $H_2$ when $t=t_2$\,, then $t_2 = (\alpha_2 - \alpha_1 )/|u|^2$. Thus, $d(x, H_2) = |t_2 u|=(\alpha_2 - \alpha_1 )/|u| $\,, which is independent of the choice of $x$. Therefore, when $H_1$ and $H_2$ are parallel, $d(H_1, H_2)$ is given by the length of any perpendicular line segment joining them.

A \emph{polytope} is the convex hull of a finite set of points in some $\mathbb{R}^n$\,, or equivalently, a bounded set which is an intersection of finitely many closed half-spaces. Let $\Delta = \text{conv}(a_1, a_2, \ldots , a_m)$ where $a_i \in \mathbb{Z}^n$. Then define the \emph{dilation of $\Delta$}\,, $h*\Delta$\,, as  
\begin{eqnarray*}
h*\Delta & = & \{ hx : x \in \Delta \} \\
 & = & \{\,\sum \lambda_i a_i : \lambda_i \ge 0,\, \sum \lambda_i =h\}\\
 & = & \text{conv}( ha_1, \ldots, ha_m )\,.
\end{eqnarray*}

Assume that $\Delta \subseteq \mathbb{R}^n$ is an $n$-dimensional nonempty lattice polytope, and $h$ is a positive integer. Then Ehrhart~\cite{Ehr} showed that there is a polynomial $p(h)$, called the \emph{Ehrhart polynomial}, such that 
 \[ |(h*\Delta ) \cap \mathbb{Z}^n | = p(h) \] where
\[ p(h) =\text{Vol}(\Delta) h^n + \frac{\text{Vol}(\partial(\Delta))}{2} h^{n-1} + \cdots + \chi(\Delta) \,. \]
Here, $\chi(\Delta)$ is the Euler characteristic of $\Delta$, and Vol($\partial(\Delta)$) is the surface area of $\Delta$ normalized with respect to the sublattice on each face of $\Delta$.

If $\Delta = \text{conv}(A)$ where $A$ is a finite set of integral points in some $\mathbb{R}^n$, then $|(h*\Delta ) \cap \mathbb{Z}^n | \ge |hA|$. Then we can consider the growth of $|hA|$ instead. Nathanson~\cite{NathGr} proved that the growth of $|hA|$ is a linear function when $A$ is a subset of integers normalized in a way. When $A_1 , A_2, \ldots , A_r$ and $B$ are finite subsets of $\mathbb{N}_0$, normalized similarly as above, then Han, Kirfel, and Nathanson~\cite{HKN} showed that $| B+ h_1 A_1 + \cdots +h_r A_r |$ is a multilinear function of $h_1 , \ldots , h_r$ eventually. Furthermore, if $A_1 , A_2, \ldots , A_r$ and $B$ are finite subsets of an abelian semigroup which contains $0$, then $| B+ h_1 A_1 + \cdots +h_r A_r |$ is a polynomial of $h_1 , \ldots , h_r $ for all sufficiently large $h_1, \ldots , h_r$\,, which was proved by Khovanski\u\i~\cite{Kho} when $r=1$, and by Nathanson~\cite{NathMul} for $r \ge 2$. And if $A, B$ are finite subsets of an abelian group without elements of finite order, then Khovanski\u\i~\cite{Kho} computed the degree and the leading coefficient of the polynomial above (actually his proof contained a gap, but the gap is fixed by the work of this paper). 

In this paper, we will investigate the growth of the sumset $hA$ from the geometric point of view.

\section{Khovanski\u\i's Lemmas}\label{S:Khowo}

Our paper starts with lemmas that Khovanski\u\i \ proved in \cite{Kho}. Let $A$ be a finite subset of $\mathbb{Z}^n$\,, $A=\{a_1, \ldots, a_m\}$\,, with $|A|=m$ and $\Delta = \text{conv}(A)$\,. Also assume that $A$ generate $\mathbb{Z}^n$ as a group. 

\begin{lemma}\label{L:Khoone}
There exists a constant $C$ with the following property: for all linear combination $\sum \lambda_i a_i$ of $a_i \in A$ with real coefficients $\lambda_i$ such that $\sum \lambda_i a_i$ is an integral point, there exists a linear combination $\sum n_i a_i$ of $a_i$ with integer coefficients $n_i$ such that $\sum n_i a_i = \sum \lambda_i a_i$\,, with $\sum |n_i - \lambda_i| < C$\,.
\end{lemma}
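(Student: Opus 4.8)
The plan is to recast the statement as a fact about the surjective linear map $\phi\colon\mathbb{R}^m\to\mathbb{R}^n$ determined by $\phi(e_i)=a_i$, where $e_1,\dots,e_m$ is the standard basis of $\mathbb{R}^m$; a coefficient tuple $\lambda=(\lambda_1,\dots,\lambda_m)$ then corresponds to the point $\phi(\lambda)=\sum\lambda_ia_i$. The hypothesis that $A$ generates $\mathbb{Z}^n$ as a group says exactly that $\phi(\mathbb{Z}^m)=\mathbb{Z}^n$. So, writing $\|x\|_1$ for the sum of the absolute values of the coordinates of $x$, the task is: given $\lambda\in\mathbb{R}^m$ with $\phi(\lambda)\in\mathbb{Z}^n$, produce $\nu\in\mathbb{Z}^m$ with $\phi(\nu)=\phi(\lambda)$ and $\|\nu-\lambda\|_1<C$ for a constant $C$ depending only on $A$.

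First I would reduce $\lambda$ modulo the integer lattice: set $w_i=\lfloor\lambda_i\rfloor$ and $\mu=\lambda-w$, so $w\in\mathbb{Z}^m$ and $\mu\in[0,1)^m$. Since $\phi(w)\in\mathbb{Z}^n$ and $\phi(\lambda)\in\mathbb{Z}^n$ by assumption, we get $\phi(\mu)=\phi(\lambda)-\phi(w)\in\mathbb{Z}^n$. Thus $\mu$ lies in the bounded cube $[0,1)^m$ and its image under $\phi$ is an integral point. The key observation is a finiteness statement: $S:=\phi([0,1]^m)\cap\mathbb{Z}^n$ is finite, because $\phi([0,1]^m)$ is compact, hence bounded. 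For each $v\in S$, since $v\in\mathbb{Z}^n=\phi(\mathbb{Z}^m)$, fix once and for all an integer preimage $z(v)\in\mathbb{Z}^m$ with $\phi(z(v))=v$, and set $C=m+\max_{v\in S}\|z(v)\|_1$, a constant depending only on $A$.

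Now, given $\lambda$ as above, we have $\phi(\mu)\in S$, so put $\nu=w+z(\phi(\mu))\in\mathbb{Z}^m$. Then $\phi(\nu)=\phi(w)+\phi(\mu)=\phi(w)+\bigl(\phi(\lambda)-\phi(w)\bigr)=\phi(\lambda)$, i.e. $\sum\nu_ia_i=\sum\lambda_ia_i$, and
\[
\|\nu-\lambda\|_1=\|z(\phi(\mu))-\mu\|_1\le\|z(\phi(\mu))\|_1+\|\mu\|_1<\Bigl(\max_{v\in S}\|z(v)\|_1\Bigr)+m=C,
\]
using $\|\mu\|_1<m$ since $\mu\in[0,1)^m$. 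This is precisely the conclusion of the lemma.

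The points that need care are: (i) correctly using the generating hypothesis — it is what guarantees that the integer preimages $z(v)$ exist, and the lemma fails without it; and (ii) the uniformity of $C$, which rests entirely on the compactness (hence finiteness) of $S$, not on any estimate for the individual $\lambda$. I expect (ii) — packaging the finitely many "base cases" coming from the unit cube into one constant — to be the only real idea, with everything else being bookkeeping. An essentially equivalent route, which I might mention instead, is to observe that $K=\ker\phi$ is a rational subspace, that $\mathbb{Z}^m\cap K$ is a full-rank lattice in $K$, and to take $C$ to be its covering radius in the $\ell^1$ norm plus $1$; this is slicker but hides the same finiteness inside standard lattice theory.
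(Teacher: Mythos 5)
Your proposal is correct and is essentially the paper's own argument in different notation: your finite set $S=\phi([0,1]^m)\cap\mathbb{Z}^n$ is the paper's set $X$, and both proofs split $\lambda$ into its integer part plus a fractional remainder in the unit cube, fix one integer representation for each point of this finite set, and take $C=m+\max$ of the $\ell^1$ norms of those representations. No substantive difference.
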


\begin{proof}
Let $X= \{x : x \in \mathbb{Z}^n, x=\sum \lambda_i a_i\,,\  \text{with}\ 0 \le \lambda_i \le 1 \}   $\,, which is a finite set. Since $A$ generate $\mathbb{Z}^n$\,, each $x \in X$ can be written as $x= \sum_{i=1}^m n_i(x) a_i$\,, where $n_i(x) \in \mathbb{Z}$\,. So for each $x \in X$\,, we fix one representation $\sum_{i=1}^m n_i(x) a_i$ with $n_i(x) \in \mathbb{Z}$\,. Let $q= \max_{x \in X} \sum_{i=1}^m |n_i(x)|$ and let $C=m+q$\,, a positive integer. Then for any $z=\sum \lambda_i a_i \in \mathbb{Z}^n$\,, $x= z-\sum [\lambda_i] a_i \in X$\,. So $x=\sum_{i=1}^m n_i(x) a_i$ with $n_i(x) \in \mathbb{Z}$ and $z= \sum_{i=1}^m \bigl( n_i(x) + [\lambda_i] \bigr) a_i= \sum_{i=1}^m \lambda_i a_i$ with
$\sum |n_i(x) +[\lambda_i] -\lambda_i| < \sum_{i=1}^m \bigl(|n_i(x)|+1 \bigr) \le q+m =C$\,.
\end{proof}

Let $h$ be a positive integer and assume $0 \in A$. Then \[\Delta = \{ \,\sum \lambda_i a_i : \lambda_i \ge 0,\, \sum \lambda_i \le 1 \}\] and \[h*\Delta = \{\,\sum \lambda_i a_i : \lambda_i \ge 0,\, \sum \lambda_i \le h\}\,. \]
 Define \[\Delta(h,C)=\{\,\sum \lambda_i a_i : \lambda_i \ge C\,,\, \sum \lambda_i \le h-C\}\] with $C$ as in Lemma~\ref{L:Khoone}.

Then, if $x=\sum \lambda_i a_i \in \Delta(h,C)$\,, let $\lambda_i = \alpha_i + C, \,\alpha_i \ge 0$\,. So 
\begin{eqnarray*}
\Delta(h,C) & = & \{\, \sum (\alpha_i +C) a_i : \,  \alpha_i \ge 0\,, \\
 & & \ \ \ \ \ \ \ \ \sum \alpha_i \le h-C-mC\} \\
 & = & C\sum a_i +\{\,\sum \alpha_i a_i : \, \alpha_i \ge 0\,,\\
 & & \ \ \ \ \ \ \ \ \sum \alpha_i \le h-C-mC \}\\
 & = & C\sum a_i + (h-C-mC)*\Delta\,.
\end{eqnarray*}
Note $\Delta(h,C)$ is an empty set when $h < C+mC$, a single point $C\sum a_i$ when 
$h= C+mC$, and a dilation of $\Delta$ translated by an integral point when $h \ge 
C+mC+1$\,.

Let $\mathbb{Z}^n (A)$ be the group generated by the differences of the elements of $A$\,.

\begin{lemma}\label{L:Khosec}
Assume $\mathbb{Z}^n (A) = \mathbb{Z}^n$, and\, $0 \in A$\,. Then, every integral point in $\Delta(h,C)$ belongs to the sumset $hA$\,.
\end{lemma}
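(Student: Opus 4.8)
The plan is to apply Lemma~\ref{L:Khoone} directly to a real representation of a given integral point of $\Delta(h,C)$, and then to notice that the integer coefficients produced are automatically nonnegative and have sum at most $h$ --- which is exactly what is needed to realize the point in $hA$.

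First I would check that the hypothesis of Lemma~\ref{L:Khoone} is at our disposal. Since $0\in A$, every $a_i=a_i-0$ is a difference of elements of $A$, so $\mathbb{Z}^n(A)$ coincides with the subgroup generated by $A$; the standing assumption $\mathbb{Z}^n(A)=\mathbb{Z}^n$ therefore says precisely that $A$ generates $\mathbb{Z}^n$ as a group. Hence Lemma~\ref{L:Khoone} applies and yields a constant $C$, and this is the very $C$ appearing in the definition of $\Delta(h,C)$.

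Now let $z$ be an integral point of $\Delta(h,C)$ and fix real coefficients with $z=\sum\lambda_i a_i$, $\lambda_i\ge C$ for every $i$, and $\sum\lambda_i\le h-C$. Applying Lemma~\ref{L:Khoone} to this representation gives integers $n_i$ with $\sum n_i a_i=z$ and $\sum_i|n_i-\lambda_i|<C$. Because each nonnegative summand $|n_i-\lambda_i|$ is itself strictly less than $C$, we get $n_i>\lambda_i-C\ge 0$, so every $n_i$ is a nonnegative integer. Likewise $\sum_i n_i\le\sum_i\lambda_i+\sum_i|n_i-\lambda_i|<(h-C)+C=h$, hence $\sum_i n_i\le h$.

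Finally, the identity $z=\sum_i n_i a_i$ with $n_i\ge 0$ exhibits $z$ as a sum of $\sum_i n_i$ (not necessarily distinct) elements of $A$: the element $a_i$ taken $n_i$ times. Since $\sum_i n_i\le h$ and $0\in A$, I pad this sum with $h-\sum_i n_i$ copies of $0$, obtaining a sum of exactly $h$ elements of $A$ that equals $z$; therefore $z\in hA$. If $\Delta(h,C)=\varnothing$ the statement is vacuous, and otherwise this argument applies uniformly (including the boundary case $h=C+mC$), so no case analysis is needed. I do not expect a real obstacle here: the one point demanding a little care is that $\sum_i|n_i-\lambda_i|<C$ controls each individual deviation, which is what forces the coefficients to be nonnegative; the substantive input is Lemma~\ref{L:Khoone} itself.
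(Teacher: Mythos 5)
Your proposal is correct and follows essentially the same route as the paper: apply Lemma~\ref{L:Khoone} to a representation with all $\lambda_i\ge C$ and $\sum\lambda_i\le h-C$, deduce nonnegativity of the $n_i$ from $\sum|n_i-\lambda_i|<C$, bound $\sum n_i$ by $h$, and pad with $0\in A$. Your added observation that $0\in A$ makes $\mathbb{Z}^n(A)$ equal to the group generated by $A$ (so Lemma~\ref{L:Khoone} indeed applies) is a small but welcome clarification, not a different argument.
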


\begin{proof}
Let $z$ be an integral point in $\Delta(h,C)$\,. Then \[z= \sum \lambda_i a_i, \ \ \lambda_i \ge C\,, \,\sum \lambda_i \le h-C\,.\] By Lemma~\ref{L:Khoone}, $z=\sum n_i a_i\,,\  n_i \in \mathbb{Z}\,,\, \sum | n_i - \lambda_i | < C$\,. If $n_i < 0$ for some $i$, then $| n_i - \lambda_i | > C$. Therefore, all $n_i$ must be nonnegative. And $\sum n_i = \sum | n_i | = \sum | n_i - \lambda_i + \lambda_i | \le \sum | n_i - \lambda_i |\, + \, \sum | \lambda_i | < C+h-C = h$\,. Thus $z= \sum n_i a_i\,,\  n_i \ge 0\,, \,\sum n_i < h$\,. Since $0 \in A$\,, \[hA=\{\, \sum n_i a_i : n_i \ge 0, \,\sum n_i \le h \}\,,\] therefore $ z \in hA$\,.
\end{proof}

Using these results, Khovanski\u\i \ in \cite{Kho} gave an argument for the following theorem, but the argument contained an error about boundary points. For details and how to modify his arguments to get a similar result on simplex, see \cite{Jae}.

\begin{theorem}\label{T:Dist}
Suppose $\mathbb{Z}^n (A) = \mathbb{Z}^n$. Then, there exists a constant $\rho
$ with the following property: for any positive integer $h$, every integral point of $h*\Delta$, whose distance to $\partial(h*\Delta)$ is more than 
$\rho
$\,, belongs to $hA$. 
\end{theorem}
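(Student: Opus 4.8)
The plan is to derive Theorem~\ref{T:Dist} from Lemma~\ref{L:Khosec} by showing that an integral point of $h*\Delta$ which is far enough from $\partial(h*\Delta)$ must in fact lie in $\Delta(h,C)$. The first step is a harmless normalization: if $0 \notin A$, pick $a_0 \in A$ and replace $A$ by $A - a_0$. This set contains $0$, has the same difference group $\mathbb{Z}^n(A) = \mathbb{Z}^n$, and satisfies $h(A - a_0) = hA - h a_0$ and $h*\text{conv}(A - a_0) = h*\Delta - h a_0$. Since translation by the integral vector $h a_0$ is an isometry of $\mathbb{R}^n$ carrying $\mathbb{Z}^n$ onto itself and $\partial(h*\Delta)$ onto the translated boundary, any constant $\rho$ that works after the translation works before it, so we may assume $0 \in A$.

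Next I would compare the facet descriptions of $h*\Delta$ and $\Delta(h,C)$. Write $\Delta = \bigcap_{j=1}^{k} \{x : (x, u_j) \le \beta_j\}$ with $u_1, \dots, u_k$ the (finitely many, $h$-independent) primitive outer facet normals; since $0 \in \Delta$ each $\beta_j \ge 0$, and since each $a_i \in \Delta$ we have $(a_i, u_j) \le \beta_j$. Then $h*\Delta = \bigcap_j \{x : (x, u_j) \le h\beta_j\}$, whereas the identity $\Delta(h,C) = C\sum a_i + (h - C(m+1))*\Delta$ already obtained above (valid once $h \ge C(m+1)+1$) gives $\Delta(h,C) = \bigcap_j \{x : (x, u_j) \le h\beta_j - C\mu_j\}$, where $\mu_j := (m+1)\beta_j - \sum_{i=1}^m (a_i, u_j)$ satisfies $\beta_j \le \mu_j$, hence $\mu_j \ge 0$. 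Thus the $j$-th facet hyperplane of $\Delta(h,C)$ is parallel to the $j$-th facet hyperplane of $h*\Delta$ and lies a distance $\delta_j := C\mu_j / |u_j|$ to the inside of it, and crucially $\delta_j$ does not depend on $h$. Set $\delta := \max_j \delta_j$.

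The geometric heart of the argument --- and precisely the point where the original reasoning went wrong --- is the elementary fact that for a polytope $P = \bigcap_j H_j^-$ and any $x \in P$ one has $d(x, \partial P) \le d(x, H_j)$ for every facet hyperplane $H_j$: if $s = d(x, \partial P)$ then the open ball $B(x, s)$ avoids $\partial P$ and, being connected and containing the interior point $x$, stays in $\mathrm{int}(P) \subseteq \mathrm{int}(H_j^-)$, so the foot of the perpendicular from $x$ to the supporting hyperplane $H_j$ is at distance at least $s$ from $x$. Granting this, fix $h \ge C(m+1)+1$ and an integral point $x \in h*\Delta$ with $d(x, \partial(h*\Delta)) > \delta$. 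For each $j$ the distance from $x$ to the $j$-th facet hyperplane of $h*\Delta$ is at least $d(x, \partial(h*\Delta)) > \delta \ge \delta_j$, which rearranges to $(x, u_j) < h\beta_j - C\mu_j$; as this holds for all $j$, we get $x \in \Delta(h,C)$, and therefore $x \in hA$ by Lemma~\ref{L:Khosec}. For the finitely many small values $h \le C(m+1)$, where $\Delta(h,C)$ is empty or a single point, the polytope $h*\Delta$ has diameter at most $C(m+1)\,\text{diam}(\Delta)$, so no point of it is farther than that from its boundary; hence $\rho := \max\{\delta,\ C(m+1)\,\text{diam}(\Delta)\}$ works for every positive integer $h$, the small-$h$ case being vacuous.

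The step I expect to be the real obstacle is the boundary geometry: one must ensure that being far from $\partial(h*\Delta)$ simultaneously controls the distance to \emph{every} facet hyperplane, and that the resulting inward shifts $\delta_j$ are genuinely independent of $h$. An estimate that compares $x$ only to a single nearby facet, or one whose error grows with $h$, would not close the argument, and this is exactly the gap about boundary points alluded to before the statement of the theorem.
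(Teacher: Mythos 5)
Your proposal is correct and follows essentially the same route as the paper: normalize so $0 \in A$, write $h*\Delta$ and $\Delta(h,C)$ as intersections of parallel half-spaces whose bounding hyperplanes are separated by an $h$-independent distance, show that a point far from $\partial(h*\Delta)$ is far from every supporting hyperplane and hence lies in $\Delta(h,C)$, apply Lemma~\ref{L:Khosec}, and absorb the small values of $h$ into $\rho$ via a diameter bound. The only difference is cosmetic: where the paper justifies the key inequality $d(z,\partial(h*\Delta)) \le d(z,H_i)$ by a perpendicular ray meeting the boundary (homeomorphic to a sphere), you use the equally valid observation that the open ball of radius $d(x,\partial P)$ around an interior point is connected, misses $\partial P$, and therefore stays in the open half-space determined by each supporting hyperplane.
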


The condition $\mathbb{Z}^n (A) = \mathbb{Z}^n$ implies that the dimension of $\Delta$ is $n$. In general, $hA$ is a proper subset of $(h*\Delta) \cap \mathbb{Z}^n$. Theorem~\ref{T:Dist} states that $hA$ takes all of the central region in $h*\Delta$.

\section{Proof of Theorem}\label{S:Proof}

Now we prove Theorem~\ref{T:Dist}.

\begin{proof}[PROOF OF THEOREM~\ref{T:Dist}]
Take any hyperplane \[H=\{ x : (x,u)=\alpha \}.\] Then, for a positive integer $h$, \[h*H = \{ x : (x,u) = h\alpha\}, \] so the dilation of a hyperplane results in another hyperplane which is parallel to the original one. And \[H\! -\! b = \{ x : (x,u) = \alpha - (b,u) \}\]  where $ b \in \mathbb{R}^n$\,, so the translation of a hyperplane is a hyperplane that is parallel to the original one as well. 

Now, let's calculate the distance between \[H_1 = h*H \,,\] \[ H_2 = g*H -b,\] where $h > g$, $h,g$ are positive integers, and $b \in \mathbb{R}^n $. Then $H_1 = \{ x : (x,u)=h\alpha \}$, $H_2 = \{ x : (x,u)=g\alpha - (b,u) \}$, so $H_1$ is parallel to $H_2$\,. Take any point $x_1 \in H_1$\,. Then $x_1 + tu,\, t \in \mathbb{R}$ is a ray perpendicular to both $H_1$ and $H_2$\,. Let's say $x_1 + tu \in H_2$ when $t= t_2$\,. Then \[ t_2 = \frac{(g-h)\alpha - (b,u)}{|u|^2}\,,\]
\[ d(H_1, H_2) = |t_2 u| = \frac{|(g-h)\alpha -(b,u)|}{|u|} \,. \]

Without loss of generality, we may assume $0 \in A$ because, if not, take any $a \in A$ which is also a vertex of $\Delta$. Then take $\bar{\Delta} = \Delta - a$ so that $0 \in \bar{\Delta}$. Then $\bar{\Delta} = \text{conv}(A-a)$ and $h*\bar{\Delta} = h*\Delta -ha = h* \text{conv}(A-a)$\,. And, for any positive integer $h$, if $x \in  (h*\Delta) \cap \mathbb{Z}^n$ with $d(x, \partial(h*\Delta)) > \rho$\,, then $x-ha \in h*\bar{\Delta}$, and $d(x-ha, \partial(h*\bar{\Delta})) > \rho$ since a translation doesn't change the distance. Thus $x-ha \in h(A-a)= hA-ha$. So $x \in hA$, proving our claim.

Let $h \ge C+ mC +1$. Recall \[\Delta(h,C)= C\sum a_i +(h-C-mC)*\Delta.\] Let $\Delta = G_1^+ \cap \ldots \cap G_l^+$ where $G_i$'s are hyperplanes $\{x : (x,u_i)=\alpha_i \}$ with $G_i \cap \Delta \ne \emptyset$. Then $h*\Delta = H_{1}^+ \cap \ldots \cap H_{l}^+$ and $\Delta(h,C) = H_{1}^{'+} \cap \ldots \cap H_{l}^{'+}$ where $H_i = h*G_i$\,, $H_{i}^{'} = (h-C-mC)*G_i +C\sum a_i$\,. And for all $h \ge C+mC+1$, \[d(H_i,H_{i}^{'}) = \frac{|(-C-mC)\alpha_i + (C\sum a_i\,,u_i)|}{|u_i|} \]
for $i=1, \ldots , l$, using the result above on the distance between hyperplanes. Thus, for all $i=1, \ldots ,l$, the distance $d(H_i, H_{i}^{'})$ remains same for all $h \ge C+mC+1$. 

Thus, fix any $h \ge C+mC+1$. Define \[ \rho = \max\, \{\, \delta\bigl((C+mC)*\Delta\bigr), \,d(H_i, H_{i}^{'}), i=1, \ldots , l \,\}\] where $\delta(S)$ represents the diameter of the set $S$. Then $\rho$ is independent of $h$. Let $z \in h*\Delta$ be an integral point with $d(z, \partial(h*\Delta)) > \rho$. Note that if $h \le C+mC$, then by the definition of $\rho$, such $z$ does not exist.

Let $F_i = H_i \cap (h*\Delta) \ne \emptyset$ be a face of $h*\Delta$ and $F_{i}^{'} = H_{i}^{'} \cap \Delta(h,C) \ne \emptyset$ be a face of $\Delta(h,C)$.
Assume $z \in H_1^{'-} \setminus H_{1}^{'}$. Then $d(z, H_1) < d(H_{1}^{'}, H_1) \le \rho$, but $d(z,F_1) > \rho$. Thus the perpendicular ray to $H_1$ from $z$ does not intersect $F_1$\,. It is a well known fact that every compact convex body in $\mathbb{R}^n$ with nonempty interior is homeomorphic to the closed $n$-ball, and its boundary is homeomorphic to the $(n-1)$-sphere. So \,$\partial(h*\Delta)$ is homeomorphic to the $(n-1)$-sphere. Thus, the perpendicular ray to $H_1$ from $z$ above intersects $\partial(h*\Delta)$ somewhere, say, at $z_2$ which is a point of a face $F_2$, $F_2 \ne F_1$\,. Then $z_2 \in F_2 \subseteq h*\Delta$, so $z_2 \in H_1^+$. Then $d(z, F_2) \le d(z,z_2) \le d(z,H_1) < \rho$, a contradiction. Therefore, $z \in H_1^{'+}$. Similarly, $z$ belongs to other $H_i^{'+}$ as well. Thus, $z \in H_1^{'+} \cap \ldots \cap H_l^{'+} =\Delta(h,C)$. Then, by Lemma~\ref{L:Khosec}, $z \in hA$. 
\end{proof}

By Theorem~\ref{T:Dist}, the sumset $hA$ in $\mathbb{R}^n$ takes over the central region of dilated polytopes. Han~\cite{Han} showed that, for $A \subseteq \mathbb{R}^2$ satisfying some conditions, the cardinality of $hA$ in boundary region of dilated polytopes is a linear function of $h$ when $h$ is sufficiently large. For the problems counting lattice points in "thin" annuli, Wigman~\cite{Wig} studied the statistical behavior of the counting function. It will be interesting if we can tell something more about the density or distribution of sumsets in the boundary region.

\end{document}